\theoremstyle{plain}
\newtheorem{thm}{Theorem}[section]
\newtheorem{prop}[thm]{Proposition}
\newtheorem{cor}[thm]{Corollary}
\theoremstyle{remark}
\newtheorem{rem}[thm]{Remark}
\theoremstyle{definition}
\newtheorem{ex}[thm]{Example}
\numberwithin{equation}{section}
\numberwithin{figure}{section}
\numberwithin{table}{section}
\algnewcommand\algorithmicto{\textbf{to}}
\title{\Large\bf Inverse Stochastic Control via Generalized Schr{\"o}dinger Problems}
\author{Yumiharu Nakano\\[1em]
        \small{Department of Mathematical and Computing Science, School of Computing} \\
        \small{Institute of Science Tokyo} \\
        \small{W8-28, 2-12-1, Ookayama, Meguro-ku, Tokyo 152-8550, Japan} \\
		\small{e-mail: nakano@comp.isct.ac.jp}
}
\date{\today}
\begin{document}

\maketitle

\begin{abstract}
We propose a variational formulation of an inverse problem in continuous-time stochastic control, 
aimed at identifying control costs consistent with a given distribution over trajectories. 
The formulation is based on minimizing the suboptimality gap of observed behavior.
We establish a connection between the inverse problem and a generalized dynamic Schr\"odinger problem, showing that their optimal values coincide. 
This result links inverse stochastic control with stochastic optimal transport, 
offering a new conceptual viewpoint on inverse inference in controlled diffusions.
\begin{flushleft}
{\bf Key words}: 
Inverse problems, stochastic control
\end{flushleft}
%\begin{flushleft}
%{\bf AMS MSC 2010}: 
%49N45, 93E12.
%\end{flushleft}
\end{abstract}

%49N45 Inverse problems in optimal control
%93E12 Identification in stochastic control theory
%35K10 Second order parabolic equations
%35K15 Initial value problems for second order parabolic equations
%35K55 Nonlinear parabolic equations

%35D40 Viscosity solutions

%65M70 Spectral, collocation and related methods

%%%

\section{Introduction}\label{sec:1}

Inverse stochastic control concerns the identification of latent control objectives that rationalize observed behavior
(see, e.g., Kalman \cite{kal:1964}, Bellman and Kalaba \cite{bel-kal:1963}, Ng and Russell \cite{ng-rus:2000}, 
Dvijotham and Todorov \cite{dvi-tod:2010}, and Nakano \cite{nak:2023}; 
see also Ab Azar et.al \cite{ab-etal:2020}, a recent review paper). 
In continuous-time settings, this problem naturally arises when one seeks to explain a given distribution over state--control trajectories by an underlying optimal control formulation.  
Despite its relevance to data-driven control and inverse reinforcement learning, a general framework for such inverse problems remains limited.

In this paper, we propose a variational formulation of inverse stochastic control in continuous time based on the \emph{suboptimality gap} of observed behavior. 
Given a prescribed distribution over trajectories and a class of candidate running and terminal costs, 
we quantify how far the observed behavior deviates from optimality under each candidate objective, and define the inverse problem as minimizing this gap. 
This formulation allows the analysis of observed distributions without assuming optimality of the underlying control policy.

Consider the following forward stochastic control problem: 
\begin{equation}
\label{eq:1.1}
 \inf_{u\in\mathcal{U}}J(u; f,g)
\end{equation}
where  
\[
 J(u; f,g)=\mathbb{E}\left[\int_0^Tf(t,X_t^u,u_t)dt + g(X_T^u)\right]. 
\]
The state process is described by the controlled diffusion
\begin{equation}
\label{eq:1.2}
 dX_t^u = b(t,X_t^u,u_t)dt + \sigma(t,X_t^u,u_t)dB_t.
\end{equation}
The control processes are assumed to take values in a closed set 
$U$ of $\mathbb{R}^{d_1}$. 
The precise definition of the class $\mathcal{U}$ of control processes is given in the Section \ref{sec:2} below.  

Suppose that a family of probability measures $\{\mu_t\}_{0\le t\le T}$ representing the observed state-control distribution is given.
Importantly, $\mu_t$ need not be realizable as the marginal law of any controlled process;
the formulation below is meaningful for any family of measures satisfying mild integrability conditions (see Section~\ref{sec:2}).
In the canonical case where $\mu_t$ is indeed the law of $(X_t^{u^*}, u_t^*)$ for some $u^*\in\mathcal{U}$,
the problem reduces to finding objective functions $f$ and $g$ such that $u^*$ is optimal for \eqref{eq:1.1}.

Let $\mathcal{D}$ be a given set of $(f,g)$ of functions such that $f$ is Borel measurable function on $[0,T]\times\mathbb{R}^d\times U$ and 
$g$ is Borel measurable function on $\mathbb{R}^d$. For $(f,g)\in\mathcal{D}$,  the functions $f$ and $g$ are interpreted as latent running cost and 
terminal cost functions in the forward optimal control problem \eqref{eq:1.1}, respectively.  
 To motivate the formulation, we argue informally as follows.
We introduce the functional
\begin{align*}
 V(f,g): &= \int_0^T\int_{\mathbb{R}^d\times U}f(t,x,u)\mu_t(dxdu)dt + \int_{\mathbb{R}^d}g(x)\tilde{\mu}_T(dx) - J^*(f,g)
\end{align*}
where $\tilde{\mu}_t$ denotes the probability law of $X_t^{u^*}$ and
\begin{equation}
\label{eq:1.3}
 J^*(f,g)=\inf_{u\in\mathcal{U}}J(u; f,g). 
\end{equation}

If $(X_t^{u^*}, u_t^*)$ satisfies the controlled SDE above for some $u^*\in\mathcal{U}$, then
\[
 V(f,g)\ge 0
\]
for any $(f,g)\in\mathcal{D}$. 
Moreover, if $u^*$ is optimal to the forward control problem \eqref{eq:1.1} for some $(f^*,g^*)\in\mathcal{D}$, then 
\[
 V(f^*,g^*)=\inf_{(f,g)\in\mathcal{D}}V(f,g)=0. 
\]
This means that the minimization problem  
\begin{equation}
\label{eq:1.4}
 V^*:=\inf_{(f,g)\in\mathcal{D}}V(f,g) 
\end{equation}
can be a measure of the identification. 
From an intuitive viewpoint, $V(f,g)$ quantifies the sub-optimality gap 
of the observed behavior under the candidate cost $(f,g)$. 
If the gap vanishes, the observed trajectory is indeed optimal for $(f,g)$.
Hence, minimizing $V(f,g)$ provides a natural criterion for model identification,
analogous to minimizing a loss function in statistical learning.

The main contribution of the paper is a novel connection between this inverse control formulation and a \emph{generalized dynamic Schr\"odinger problem}. 
For the Schr\"odinger problem we refer to, e.g, Schr{\"o}dinger \cite{schr:1931} and \cite{schr:1932},  
Bernstein \cite{ber:1932}, Jamison \cite{jam:1974,jam:1975}, 
Dai Pra \cite{dai:1991}, Mikami \cite{mik:2004}, Nagasawa \cite{nag:1964}, Nelson \cite{nel:2020}, Zambrini \cite{zam:1986a}, and the works cited therein.
We show that the optimal value of the inverse problem coincides with that of a stochastic control problem with relaxed marginal constraints. 
This result establishes a variational link between inverse stochastic control and stochastic optimal transport,
providing a new conceptual perspective on inverse inference in controlled diffusions.
We also establish existence of optimal solutions to the inverse problem under mild assumptions and discuss additional structural characterizations. 

The remainder of the paper is organized as follows. Section~II introduces the problem formulation and establishes existence results. 
Section~III presents the connection with generalized Schr\"odinger-type problems and derives the main variational equivalence.

\section{Existence}\label{sec:2}

First, we collect some notation used in this paper. 
Denote by $\mathcal{P}(\mathcal{X})$ the set of all Borel probability measures $\nu$ on a Polish space $\mathcal{X}$. 
We write $a^{\mathsf{T}}$ for the transpose of a vector or matrix $a$. 
For Euclidean spaces $\mathcal{X}$ and $\mathcal{Y}$, denote by $C(\mathcal{X}, \mathcal{Y})$ the space of all continuous functions $\varphi: \mathcal{X}\to\mathcal{Y}$. 
In particular, we denote $C(\mathcal{X})=C(\mathcal{X},\mathbb{R})$. 
We also denote by $C_b(\mathcal{X})$ the space of bounded continuous functions on $\mathcal{X}$.  
Given a Polish space $\mathcal{X}$, for any $\pi\in\mathcal{P}(\mathcal{X})$ and any Borel measurable function $\varphi:\mathcal{X}\to\mathbb{R}$ that is integrable 
with respect to $\pi$, we put 
\[
 \langle \varphi, \pi\rangle := \int_{\mathcal{X}}\varphi(x) \pi(dx). 
\]

Let $\{B_t\}_{0\le t\le T}$ be an $m$-dimensional standard 
Brownian motion on a complete probability space $(\Omega,\mathcal{F},\mathbb{P})$. 
Further, let $X_0$ be a random variable that is independent of $\{B_t\}_{0\le t\le T}$ such that 
\[
 \mathbb{E}|X_0|^2<\infty. 
\]
Denote by $\{\mathcal{F}_t\}_{0\le t\le T}$ the augmented filtration  
generated by $\{B_t\}_{0\le t\le T}$ and $X_0$. 
For simplicity, we assume $U\ni 0$.  
The class $\mathcal{U}$ of controls is then defined by the set of all 
$U$-valued $\{\mathcal{F}_t\}$-adapted processes $\{u_t\}_{0\le t\le T}$ satisfying 
\[
 \mathbb{E}\int_0^T|u_t|^2dt < \infty. 
\]
To discuss the uniqueness, we impose the following: 

\begin{enumerate}
\item[(A1)] There exists a constant $C_0>0$ and a modulus of continuity $\rho:[0,\infty)\to [0,\infty)$ such that 
for $\varphi(t,x,u) = b(t,x,u)$, $\sigma(t,x,u)$ we have   
\begin{align*}
 |\varphi(t,x,u)-\varphi(t,x^{\prime},u^{\prime})|
  &\le C_0|x-x^{\prime}| + \rho(|u-u^{\prime}|), \\ 
 |\varphi(t,0,0)|&\le C_0, 
\end{align*}
for $t\in [0,T]$, $x,x^{\prime}\in\mathbb{R}^d$, and $u,u^{\prime}\in U$. 
\end{enumerate}

Under (A1), for any given $u\in\mathcal{U}$, there exists a unique solution $\{X_t^u\}_{0\le t\le T}$ of the controlled stochastic differential equation 
\eqref{eq:1.2} with initial condition $X_0$ such that 
\begin{equation}
\label{eq:2.1}
 \mathbb{E}\left[\sup_{0\le t\le T}|X_t^u|^2\right]< \infty. 
\end{equation}

Denote by $P_t^u \in \mathcal{P}(\mathbb{R}^d \times U)$ the probability distribution of $(X_t^u, u_t)$ under $\mathbb{P}$ for each $t\in [0,T]$.
Also denote by $\tilde{P}_t^u$ the law of $X_t^u$, i.e., $\tilde{P}_t^u(A)=P_t^u(A\times U)$, $A\in\mathcal{B}(\mathbb{R}^d)$.  
With these notation, 
\[
 V(f,g)= \int_0^T\langle f(t,\cdot,\cdot), \mu_t\rangle dt + \langle g, \tilde{\mu}_T\rangle - J^*(f,g). 
\]

\begin{prop}
Let $(A1)$ hold. 
Suppose that $\mu_t$ is the marginal law of $(X_t^{u^*}, u^*_t)$ for each $t\in [0,T]$, where $u^*=\{u^*_t\}_{0\le t\le T}\in\mathcal{U}$ is an optimal policy to the 
problem $\eqref{eq:1.1}$ for some $(f^*,g^*)\in\mathcal{D}$. Then, this $(f^*,g^*)$ is a minimizer for the inverse problem $\eqref{eq:1.4}$ and we have 
\[
 V(f^*,g^*)= \inf_{(f,g)\in\mathcal{D}}V(f,g) = 0. 
\]
\end{prop}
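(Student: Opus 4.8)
The plan is to prove the two assertions in the order they appear: first that $V(f,g)\ge 0$ for every $(f,g)\in\mathcal{D}$, so that $V^*\ge 0$, and then that this lower bound is attained at $(f^*,g^*)$, which simultaneously gives $V(f^*,g^*)=0=V^*$ and shows $(f^*,g^*)$ is a minimizer of \eqref{eq:1.4}.

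For the lower bound I would rewrite the two integral terms of $V(f,g)$ probabilistically. By hypothesis $\mu_t$ is the law of $(X_t^{u^*},u^*_t)$, so $\langle f(t,\cdot,\cdot),\mu_t\rangle=\mathbb{E}[f(t,X_t^{u^*},u^*_t)]$ for each $t$; likewise, since here $X_t^*=X_t^{u^*}$, the measure $\tilde\mu_T$ is the law of $X_T^{u^*}$, hence $\langle g,\tilde\mu_T\rangle=\mathbb{E}[g(X_T^{u^*})]$. Interchanging expectation and time integration (Fubini, legitimate because $V(f,g)$ is assumed well-defined on $\mathcal{D}$) then yields the identity
\[
 \int_0^T\langle f(t,\cdot,\cdot),\mu_t\rangle\,dt+\langle g,\tilde\mu_T\rangle
 =\mathbb{E}\Bigl[\int_0^T f(t,X_t^{u^*},u^*_t)\,dt+g(X_T^{u^*})\Bigr]
 =J(u^*;f,g).
\]
Since $u^*\in\mathcal{U}$ is an admissible competitor in \eqref{eq:1.3}, we have $J^*(f,g)\le J(u^*;f,g)$, and therefore $V(f,g)=J(u^*;f,g)-J^*(f,g)\ge 0$. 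Taking the infimum over $(f,g)\in\mathcal{D}$ gives $V^*\ge 0$.

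For the second part I would invoke optimality of $u^*$ for the pair $(f^*,g^*)$: by definition $J(u^*;f^*,g^*)=\inf_{u\in\mathcal{U}}J(u;f^*,g^*)=J^*(f^*,g^*)$, so the displayed identity with $(f,g)=(f^*,g^*)$ gives $V(f^*,g^*)=J(u^*;f^*,g^*)-J^*(f^*,g^*)=0$. Combining with $V^*\ge 0$ and $(f^*,g^*)\in\mathcal{D}$ we obtain $0\le V^*\le V(f^*,g^*)=0$, hence $V^*=V(f^*,g^*)=0$ and $(f^*,g^*)$ is a minimizer.

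The argument is essentially bookkeeping once the probabilistic reformulation is in place; the only delicate point — and the place where the standing hypotheses are actually used — is ensuring each term of $V(f,g)$ is finite so that the subtraction $J(u^*;f,g)-J^*(f,g)$ is meaningful and the Fubini interchange is valid. This is where the integrability conditions implicit in the definition of $\mathcal{D}$ and the moment bound \eqref{eq:2.1} enter; assumption (A1) itself is needed only to make "the law of $(X_t^{u^*},u^*_t)$" unambiguous, via existence and uniqueness of $X^{u^*}$ solving \eqref{eq:1.2} with \eqref{eq:2.1}.
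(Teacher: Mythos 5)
Your proposal is correct and follows essentially the same route as the paper, which proves this proposition only implicitly via the observation in the introduction: since $\mu_t$ is the law of $(X_t^{u^*},u_t^*)$, the first two terms of $V(f,g)$ equal $J(u^*;f,g)$, so $V(f,g)=J(u^*;f,g)-J^*(f,g)\ge 0$, with equality at $(f^*,g^*)$ by optimality of $u^*$. Your added care about integrability and the Fubini interchange is a reasonable tightening of what the paper leaves tacit, but it is not a different argument.
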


In the following analysis, we do not assume optimality of $\mu_t$. Even more, 
each $\mu_t$ may not be necessarily a marginal measure of $(X_t^u, u_t)$ for some $u\in\mathcal{U}$. However, we do assume the following conditions on 
$\mu_t$ and $\mathcal{D}$: 
\begin{enumerate}
\item[(A2)] The family $\{\mu_t\}_{0\le t\le T}$ satisfies 
\[
 \int_0^T\int_{\mathbb{R}^d\times U}(|x|^2 + |u|^2)\mu_t(dxdu)dt < \infty. 
\]
\end{enumerate}

\begin{enumerate}
\item[(A3)] There exists a constant $K>0$ such that for any $(f,g)\in\mathcal{D}$ we have 
\[
 |f(t,x,u)| + |g(x)|\le K(1+|x|^2 + |u|^2),  
\]
for $t\in [0,T]$, $x\in\mathbb{R}^d$, $u\in U$.
\end{enumerate}
Under (A1)--(A3), by \eqref{eq:2.1} it is straightforward to check that 
$V^*=\inf_{(f,g)\in\mathcal{D}}V(f,g)>-\infty$. 

Further, we assume the following: 
\begin{enumerate}
\item[(A4)] $U$ is compact. 
\end{enumerate}

For $(f,g)\in\mathcal{D}$, define $\phi:[0,T]\times\mathbb{R}^d\times U\to\mathbb{R}^2$ by 
\[
 \phi_{f,g}(t,x,u) = (f(t,x,u), g(x))^{\mathsf{T}}. 
\]
Then we assume the following: 
\begin{enumerate}
\item[(A5)]  The class $\{\phi_{f,g}: (f,g)\in\mathcal{D}\}$ is contained in $C([0,T]\times \mathbb{R}^d\times U, \mathbb{R}^2)$ and is equi-uniformly continuous.
\end{enumerate}

\begin{ex}
Assume that each $h=f,g$ is given by 
\[
 h(\xi) = \sum_{j=1}^N\gamma_j e^{-\alpha_j |\xi- \xi_j|^2}, \quad \xi\in [0,T]\times\mathbb{R}^d\times U, 
\]
for some $N\in\mathbb{N}$, $\gamma_j\in\mathbb{R}$, $\alpha_j>0$, $\xi_j\in [0,T]\times\mathbb{R}^d\times U$, $j=1,\ldots,N$, such that 
$\sum_{j=1}^N\left\{|\gamma_j| + \alpha_j (1+|\xi_j|)\right\}$ is bounded by a given positive constant. 
Then the class $\mathcal{D}$ satisfies (A3) and (A5). 
\end{ex}

Now we show the existence of a solution to the proposed inverse problem.
\begin{prop}
\label{prop:2.4}
Suppose that $(A1)$--$(A5)$ hold. Then there exists a minimizer of the inverse problem $\eqref{eq:1.4}$.  
\end{prop}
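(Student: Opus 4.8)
The plan is to prove existence by the direct method of the calculus of variations, exploiting the equi-uniform continuity assumption (A5) to extract a convergent subsequence of costs and the moment bounds (A2) together with the growth bound (A3) to pass to the limit in the functional $V$. First I would take a minimizing sequence $(f_n, g_n) \in \mathcal{D}$ with $V(f_n, g_n) \to V^*$, and observe via (A5) that the associated maps $\phi_{f_n,g_n}$ form an equi-uniformly continuous, and (by (A3)) pointwise bounded on compacts, family in $C([0,T]\times\mathbb{R}^d\times U, \mathbb{R}^2)$. By a diagonal Arzel\`a--Ascoli argument over an exhaustion of $[0,T]\times\mathbb{R}^d\times U$ by compact sets, I can pass to a subsequence (not relabeled) converging locally uniformly to some limit $(f_\infty, g_\infty)$; this limit again satisfies the growth bound (A3) and the equi-continuity (A5), so provided $\mathcal{D}$ is closed under such limits — which I would either include as part of the standing hypotheses or note follows from the structure of $\mathcal{D}$ as in the Example — we have $(f_\infty, g_\infty) \in \mathcal{D}$.

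Next I would show $V(f_n, g_n) \to V(f_\infty, g_\infty)$, handling the three terms separately. For the linear terms $\int_0^T \langle f_n(t,\cdot,\cdot), \mu_t\rangle\, dt$ and $\langle g_n, \tilde\mu_T\rangle$, local uniform convergence plus the uniform growth bound (A3) and the moment assumption (A2) give convergence by dominated convergence: the integrands converge $\mu_t$-a.e.\ (indeed everywhere) and are dominated by $K(1+|x|^2+|u|^2)$, which is integrable by (A2). The delicate term is $J^*(f_n, g_n) = \inf_{u\in\mathcal{U}} J(u; f_n, g_n)$, where I need continuity of the value function with respect to the cost data. Here I would argue that for any fixed admissible $u \in \mathcal{U}$, the moment bound \eqref{eq:2.1} on $X^u$ together with (A3) and dominated convergence yields $J(u; f_n, g_n) \to J(u; f_\infty, g_\infty)$; moreover these convergences are uniform over the relevant class of controls because the domination $|f_n(t,x,u)| + |g_n(x)| \le K(1+|x|^2+|u|^2)$ is uniform in $n$ and the sup-of-moments bound \eqref{eq:2.1} can be made uniform over any tight family of controls — so taking infima preserves the limit, giving $J^*(f_n,g_n) \to J^*(f_\infty, g_\infty)$.

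Combining the three convergences yields $V^* = \lim_n V(f_n, g_n) = V(f_\infty, g_\infty)$, so $(f_\infty, g_\infty)$ is a minimizer.

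The main obstacle I expect is the continuity of $J^*$ in the cost data, specifically making the convergence $J(u; f_n, g_n) \to J(u; f_\infty, g_\infty)$ uniform enough in $u$ to pass through the infimum. Pointwise-in-$u$ convergence only gives $\limsup_n J^*(f_n, g_n) \le J^*(f_\infty, g_\infty) + \limsup_n \sup_u |J(u;f_n,g_n) - J(u;f_\infty,g_\infty)|$ after picking near-optimal controls, so one genuinely needs an equi-integrability argument: the key point is that the compactness of $U$ (assumption (A4)) bounds the $|u_t|^2$ contributions uniformly, and local uniform convergence of $f_n \to f_\infty$ combined with the $L^2$-moment bounds on $X^u_t$ lets one split the state-space integral into a compact region (where uniform convergence applies directly) and a tail (controlled uniformly in $n$ and $u$ by (A3) and the moment bound). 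This splitting, made quantitative with a single modulus, is the technical heart of the argument; everything else is bookkeeping with dominated convergence.
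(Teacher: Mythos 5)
Your overall strategy coincides with the paper's: take a minimizing sequence, use (A3)+(A5) and Arzel\`a--Ascoli to extract a locally uniformly convergent subsequence with limit $(f^*,g^*)\in\mathcal{D}$, handle the two $\mu$-terms by dominated convergence via (A2)--(A3), and then establish convergence of $J(\,\cdot\,;f_n,g_n)$ to $J(\,\cdot\,;f^*,g^*)$ that is \emph{uniform over} $u\in\mathcal{U}$, so that the infimum defining $J^*$ passes to the limit. The paper obtains this last, crucial uniformity in one stroke by citing Theorem 12 of Chapter 3, Section 1 of Krylov's \emph{Controlled Diffusion Processes}, which gives $\sup_{u\in\mathcal{U}}\bigl(\mathbb{E}\int_0^T|f_{n}-f^*|(t,X^u_t,u_t)\,dt+\mathbb{E}|g_{n}-g^*|(X^u_T)\bigr)\to0$; you instead propose a self-contained splitting argument. (Your side remark about closedness of $\mathcal{D}$ is fair --- the paper's assertion that (A3)+(A5) yield compactness tacitly assumes the class is closed --- and your treatment of that point is on par with the paper's.)

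The gap is in your tail estimate. After restricting to $\{|X^u_t|\le R\}$, where locally uniform convergence and compactness of $U$ do the work, you claim the complementary piece is ``controlled uniformly in $n$ and $u$ by (A3) and the moment bound.'' But (A3) only reduces that piece to $C\,\mathbb{E}\bigl[(1+|X^u_t|^2)\mathbf{1}_{\{|X^u_t|>R\}}\bigr]$, and a uniform bound $\sup_{u\in\mathcal{U}}\mathbb{E}\sup_{t}|X^u_t|^2<\infty$ (which does hold under (A1)+(A4); your ``tight family of controls'' caveat is unnecessary) is merely an $L^1$-bound on $|X^u_t|^2$ and does not make this quantity small uniformly in $u$ as $R\to\infty$: you need uniform integrability of $\{|X^u_t|^2: u\in\mathcal{U},\ t\le T\}$, which is not implied by bounded second moments, and since only $\mathbb{E}|X_0|^2<\infty$ is assumed you cannot simply upgrade to fourth moments. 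A correct repair is to split additionally on $\{|X_0|>r\}$: conditionally on $\mathcal{F}_0$, (A1)+(A4) give $\mathbb{E}[|X^u_t|^4\mid\mathcal{F}_0]\le C(1+|X_0|^4)$ uniformly in $u$, so on $\{|X_0|\le r\}$ Chebyshev yields a bound of order $(1+r^4)/R^2$, while $\mathbb{E}[(1+|X^u_t|^2)\mathbf{1}_{\{|X_0|>r\}}]\le C\,\mathbb{E}[(1+|X_0|^2)\mathbf{1}_{\{|X_0|>r\}}]$ is small uniformly in $u$ and $t$ for large $r$; choose $r$ first, then $R$. With that repair (or by invoking Krylov's estimate, as the paper does), your argument closes.
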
 
\begin{proof}
Let $\{(f_n,g_n)\}_{n=1}^{\infty}\subset\mathcal{D}$ be a minimizing sequence, i.e., 
$\lim_{n\to\infty}V(f_n,g_n)= V^*$. 
By (A3) and (A5), the class $\{\phi_{f,g}: (f,g)\in\mathcal{D}\}$ is compact in $C([0,T]\times \mathbb{R}^d\times U, \mathbb{R}^2)$. 
It follows from the Ascoli--Arzel{\`a} theorem that there exist $(f^*, g^*)\in\mathcal{D}$ and $\{n_k\}$ such that 
$\sup_{(t,x,u)\in [0,T]\times B_R\times U}|f_{n_k}(t,x,u) - f^*(t,x,u)| + \sup_{x\in B_R}|g_{n_k}(x) - g^*(x)| \to 0$, $k\to\infty$,  
for any $R>0$, where $B_R$ stands for the closed ball in $\mathbb{R}^d$ centered at $0$ with radius $R$. 
Then, the dominated convergence theorem yields 
$\int_0^T\langle f_{n_k}(t,\cdot,\cdot), \mu_t\rangle \, dt \to \int_0^T\langle f^*(t,\cdot,\cdot), \mu_t\rangle \, dt$ and 
$\langle g_{n_k}, \tilde{\mu}_T\rangle\to \langle g^*, \tilde{\mu}_T\rangle$ 
as $k\to\infty$. 
Further, applying Theorem 12 in Section 1 of Chapter 3 in Krylov \cite{kry:1980}, we get 
\begin{align*}
 \varepsilon_k&:=\sup_{u\in\mathcal{U}}\mathbb{E}\int_0^T|f_{n_k}(t,X_t^u,u_t)-f^*(t,X_t^u,u_t)| \, dt + \mathbb{E}|g_{n_k}(X_T^u)- g^*(X_T^{u})|\to 0, 
\end{align*}
as $k\to\infty$. Therefore, 
$\lim_{k\to\infty}V(f_{n_k}, g_{n_k})= V(f^*,g^*)$. Thus $(f^*,g^*)\in\mathcal{D}$ is optimal. 
\end{proof}

\begin{ex}
Consider the one-dimensional system
\[
 dX^u_t = u_t\,dt + \tfrac{1}{10}\,dB_t, \quad X_0 \sim N(0,1),
\]
with the parametric cost class $\mathcal{D}=\{(f_\theta, 0):\theta>0\}$, where $f_\theta(t,x,u)=10|x|^2+\theta|u|^2$.
For each $\theta>0$, the value function takes the form $v(t,x;\theta)=r_t(\theta)x^2+s_t(\theta)$,
where $r_t(\theta)$ satisfies the Riccati equation $\dot{r}_t = r_t^2/\theta - 10$, $r_1=0$,
with explicit solution
\[
 r_t(\theta) = \sqrt{10\theta}\,\tanh\!\bigl(\sqrt{10/\theta}\,(1-t)\bigr).
\]
Since $X_0\sim N(0,1)$, the optimal value is
\[
 J^*(f_\theta,0) = \sqrt{10\theta}\,\tanh\!\bigl(\sqrt{10/\theta}\bigr)
  - \tfrac{\theta}{100}\ln\cosh\!\bigl(\sqrt{10/\theta}\bigr).
\]
Suppose the observed data are generated by the optimal control $u_t^* = -(r_t(\theta^*)/\theta^*)X_t^{u^*}$
under the true parameter $\theta^*=1$.
By optimality, $V(f_{\theta^*},0)=0$,
and the strict convexity of the LQ problem implies $V(f_\theta,0)>0$ for all $\theta\ne\theta^*$.
Hence $V^*=0$ and $\theta^*=1$ is the unique minimizer of $\theta\mapsto V(f_\theta,0)$ over $\theta>0$,
confirming that the inverse problem \eqref{eq:1.4} correctly recovers the true parameter.
\end{ex}

We remark that some uniqueness results are already obtained in \cite{nak:2023}. A regularization of our inverse problems is a subject of a future study. 

\section{Connection with stochastic optimal transport}\label{sec:3}

In this section, we give a connection between \eqref{eq:1.4} and stochastic optimal transport such as the Schr{\"o}dinger problem. 

We shall impose the following condition on $\mathcal{D}$: 
\begin{enumerate}
\item[(B1)]  The class $\mathcal{D}$ is represented as
\[
\mathcal{D}=(f_0,g_0) + \bigcup_{\lambda>0}\lambda \tilde{\mathcal{D}}
\]
where $f_0\in C([0,T]\times\mathbb{R}^d\times U)$ with $\inf_{(t,x,u)}f_0(t,x,u)>-\infty$,
$g_0\in C_b(\mathbb{R}^d)$, and
the class $\{\phi_{f,g}: (f,g)\in\tilde{\mathcal{D}}\}$ is non-empty, convex, symmetric at origin, and compact in $C_b([0,T]\times \mathbb{R}^d\times U, \mathbb{R}^2)$.
\end{enumerate}

\begin{ex}
A class of neural networks satisfies (B1).
More precisely, let $f_\theta(t,x,u)$ and $g_\theta(x)$ be neural networks with a fixed architecture,
parameterized by weights $\theta\in\Theta$, where $\Theta\subset\mathbb{R}^m$ is a compact, convex, and symmetric set.
If the map $\theta\mapsto\phi_{f_\theta,g_\theta}$ is affine (as in the case of a single hidden layer with linear output),
then the class $\{\phi_{f_\theta,g_\theta}:\theta\in\Theta\}$ is a compact, convex, and symmetric
subset of $C_b([0,T]\times\mathbb{R}^d\times U,\mathbb{R}^2)$, so (B1) is satisfied with $\tilde{\mathcal{D}}=\{(f_\theta,g_\theta):\theta\in\Theta\}$.
\end{ex}

For any $\mathcal{P}(\mathbb{R}^d\times U)$-valued processes $\pi=\{\pi_t\}_{0\le t\le T}$ and $\nu=\{\nu_t\}_{0\le t\le T}$, put 
\begin{align*}
 \rho(\pi,\nu)&= \sup_{(f, g)\in\tilde{\mathcal{D}}}\bigg\{\int_0^T\langle f(t,\cdot,\cdot), \pi_t-\nu_t\rangle\, dt  + \langle g, \tilde{\pi}_T-\tilde{\nu}_T\rangle \bigg\}. 
\end{align*}
Here, $\tilde{\pi}_t(dx)=\pi_t(dx\times U)$ and $\tilde{\nu}_t$ is similarly defined. 
Note that under (B1), $\rho$ is a pseudometric on the set of $\mathcal{P}(\mathbb{R}^d\times U)$-valued processes. 
Consider the family of approximate bridge problems 
\[
 \inf_{u\in\mathcal{U}_{\rho, \varepsilon}}J(u; f_0,g_0), \quad \varepsilon\ge 0, 
\]
where $\mathcal{U}_{\rho,\varepsilon} = \{u\in\mathcal{U}: \rho(P^u, \mu)\le\varepsilon\}$. 
Unlike the classical Schr\"odinger bridge, the present formulation allows relaxed marginal constraints measured by the pseudometric $\rho$, 
which is induced by the class of admissible latent objectives.

Then we have the following: 
\begin{thm}
\label{thm:3.2}
Suppose that $(A1)$ and $(B1)$ hold. 
Suppose moreover that 
\begin{equation}
\label{eq:3.1}
 \inf_{u\in\mathcal{U}_{\rho, 0}}J(u; f_0,g_0) < \infty
\end{equation}
and that for any $(f,g)\in\mathcal{D}$ there exists a minimizer $u_{f,g}\in\mathcal{U}$ for the forward control problem $\eqref{eq:1.1}$. 
Then, $V^*$ is finite and 
\begin{align*}
 V^*&= \int_0^T\langle f_0(t,\cdot,\cdot), \mu_t\rangle\, dt 
         + \langle g_0, \tilde{\mu}_T\rangle - \lim_{\varepsilon\searrow 0}\inf_{u\in\mathcal{U}_{\rho, \varepsilon}}J(u; f_0,g_0). 
\end{align*}
\end{thm}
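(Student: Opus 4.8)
I would reduce $V^{*}$ to (minus) a Lagrangian dual value and then pass through a minimax theorem. Using the representation (B1), write a generic $(f,g)\in\mathcal{D}$ as $(f_{0}+\lambda\tilde{f},\,g_{0}+\lambda\tilde{g})$ with $\lambda>0$ and $(\tilde{f},\tilde{g})\in\tilde{\mathcal{D}}$. A direct computation, in which the terms $\int_{0}^{T}\langle\tilde{f},\mu_{t}\rangle\,dt+\langle\tilde{g},\tilde{\mu}_{T}\rangle$ cancel between the linear part of $V$ and $J^{*}(f,g)$, gives
\[
 V\bigl(f_{0}+\lambda\tilde{f},\,g_{0}+\lambda\tilde{g}\bigr)=L-\inf_{u\in\mathcal{U}}\Bigl\{J(u;f_{0},g_{0})+\lambda\,G(u,\tilde{f},\tilde{g})\Bigr\},
\]
where $L:=\int_{0}^{T}\langle f_{0},\mu_{t}\rangle\,dt+\langle g_{0},\tilde{\mu}_{T}\rangle$ and $G(u,\tilde{f},\tilde{g}):=\int_{0}^{T}\langle\tilde{f}(t,\cdot,\cdot),P^{u}_{t}-\mu_{t}\rangle\,dt+\langle\tilde{g},\tilde{P}^{u}_{T}-\tilde{\mu}_{T}\rangle$, so that $\sup_{(\tilde{f},\tilde{g})\in\tilde{\mathcal{D}}}G(u,\tilde{f},\tilde{g})=\rho(P^{u},\mu)$. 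Writing $\mathcal{C}:=\bigcup_{\lambda>0}\lambda\tilde{\mathcal{D}}$ — a convex cone containing the origin since $\tilde{\mathcal{D}}$ is convex and symmetric — and $G_{0}(u,h)$ for the same pairing with $(\tilde{f},\tilde{g})$ replaced by $h=(h_{1},h_{2})$, taking the infimum over $\mathcal{D}$ yields
\[
 V^{*}=L-\sup_{h\in\mathcal{C}}\ \inf_{u\in\mathcal{U}}\Bigl\{J(u;f_{0},g_{0})+G_{0}(u,h)\Bigr\}.
\]
The inequality $V^{*}\ge L-\lim_{\varepsilon\searrow0}\inf_{u\in\mathcal{U}_{\rho,\varepsilon}}J(u;f_{0},g_{0})$ is then immediate: inserting any $u_{0}\in\mathcal{U}_{\rho,\varepsilon}$ into the infimum over $u$ in the first display and using $|G(u_{0},\tilde{f},\tilde{g})|\le\rho(P^{u_{0}},\mu)\le\varepsilon$ (symmetry of $\tilde{\mathcal{D}}$) gives $V(f_{0}+\lambda\tilde{f},g_{0}+\lambda\tilde{g})\ge L-\inf_{u\in\mathcal{U}_{\rho,\varepsilon}}J(u;f_{0},g_{0})-\lambda\varepsilon$; letting $\varepsilon\searrow0$ and taking the infimum over $\mathcal{D}$ finishes it. By \eqref{eq:3.1} we have $\mathcal{U}_{\rho,0}\ne\emptyset$, and since $f_{0},g_{0}\in C_{b}$ the cost $J(\cdot;f_{0},g_{0})$ is bounded, so this limit — hence $V^{*}$ — is finite.

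For the reverse inequality, the key point is that $u\mapsto J(u;f_{0},g_{0})+G_{0}(u,h)$ depends on $u$ only through the state--control marginal flow $\mathbf{P}^{u}=\{P^{u}_{t}\}_{0\le t\le T}$ and is \emph{affine} and weakly continuous in it. Hence the inner infimum is unchanged when $\mathbf{P}^{u}$ is replaced by $\mathbf{P}$ ranging over the closed convex hull $\mathcal{R}$ of $\{\mathbf{P}^{u}:u\in\mathcal{U}\}$, since a continuous affine functional has equal infima over a set and over its closed convex hull. By (A1) the controlled SDE \eqref{eq:1.2} is well posed with the moment bound \eqref{eq:2.1}, and by (A4) $U$ is compact, so the attainable flows form a tight family and $\mathcal{R}$ is convex and weakly compact. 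On $\mathcal{R}\times\mathcal{C}$ the functional $(\mathbf{P},h)\mapsto\int_{0}^{T}\langle f_{0},P_{t}\rangle\,dt+\langle g_{0},\tilde{P}_{T}\rangle+G_{0}(\mathbf{P},h)$ is affine — a fortiori quasiconvex and quasiconcave — and separately continuous in each argument ($f_{0},g_{0},h_{1},h_{2}$ being bounded continuous), while $\tilde{\mathcal{D}}$, hence each truncation $\{\lambda(\tilde{f},\tilde{g}):0\le\lambda\le\Lambda,\ (\tilde{f},\tilde{g})\in\tilde{\mathcal{D}}\}$, is compact; applying Sion's minimax theorem on these truncations and letting $\Lambda\to\infty$ swaps $\sup_{\mathcal{C}}$ with $\inf_{\mathcal{R}}$. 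Since $\mathcal{C}$ is a cone with $\sup_{h\in\mathcal{C}}G_{0}(\mathbf{P},h)$ equal to $0$ if $\rho(\mathbf{P},\mu)=0$ and $+\infty$ otherwise, the inner supremum collapses to an indicator, giving
\[
 L-V^{*}=\inf\Bigl\{\int_{0}^{T}\langle f_{0},P_{t}\rangle\,dt+\langle g_{0},\tilde{P}_{T}\rangle:\ \mathbf{P}\in\mathcal{R},\ \rho(\mathbf{P},\mu)=0\Bigr\}=:A,
\]
where $\rho(\cdot,\mu)$ is extended to flows as the supremum over $\tilde{\mathcal{D}}$ of the corresponding pairings; this extension is continuous on $\mathcal{R}$, being a supremum over the compact set $\tilde{\mathcal{D}}$ of jointly continuous functions. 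The easy direction above is exactly $A\le\lim_{\varepsilon\searrow0}\inf_{u\in\mathcal{U}_{\rho,\varepsilon}}J(u;f_{0},g_{0})$, so only the reverse, $A\ge\lim_{\varepsilon\searrow0}\inf_{u\in\mathcal{U}_{\rho,\varepsilon}}J(u;f_{0},g_{0})$, remains.

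This last step is the main obstacle, and it is where the controlled-diffusion structure must be used: it amounts to showing that convexifying the attainable-flow set does not lower the constrained infimum — equivalently, that the closure of $\{\mathbf{P}^{u}:u\in\mathcal{U}\}$ is already convex, so that $\mathcal{R}$ coincides with it (or that a measure-valued relaxation of $\mathcal{U}$ introduces no duality gap). I would establish this from well-posedness under (A1), tightness under (A4), and a mimicking/relaxation argument in the spirit of Krylov \cite{kry:1980}; the hypothesis that every forward problem \eqref{eq:1.1} admits a minimizer $u_{f,g}$ serves to keep the relevant infima attained. Once this is granted the remaining inequality is routine: for $\mathbf{P}\in\mathcal{R}$ with $\rho(\mathbf{P},\mu)=0$ pick attainable flows $\mathbf{P}^{v_{k}}\to\mathbf{P}$; then $\rho(P^{v_{k}},\mu)\to0$ by continuity of $\rho$ and $J(v_{k};f_{0},g_{0})\to\int_{0}^{T}\langle f_{0},P_{t}\rangle\,dt+\langle g_{0},\tilde{P}_{T}\rangle$ by continuity of the cost, so $v_{k}\in\mathcal{U}_{\rho,\varepsilon}$ eventually for each $\varepsilon>0$, whence $\lim_{\varepsilon\searrow0}\inf_{u\in\mathcal{U}_{\rho,\varepsilon}}J(u;f_{0},g_{0})\le\int_{0}^{T}\langle f_{0},P_{t}\rangle\,dt+\langle g_{0},\tilde{P}_{T}\rangle$, and taking the infimum over such $\mathbf{P}$ produces $A$ on the right. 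The remaining bookkeeping — fixing the topology on flows (e.g.\ via occupation measures) so that the cost functional and $\rho$ are weakly continuous and $\mathcal{R}$ compact, and justifying the extended-real-valued minimax through the truncations $\Lambda\to\infty$ — is standard.
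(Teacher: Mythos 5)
Your Lagrangian reduction and the ``easy'' half are sound and coincide with the paper's: writing $(f,g)=(f_0+\lambda\tilde f,g_0+\lambda\tilde g)$, cancelling the linear terms, and plugging feasible $u_0\in\mathcal{U}_{\rho,\varepsilon}$ with the symmetry of $\tilde{\mathcal{D}}$ gives $V^*\ge L-\lim_{\varepsilon\searrow0}\inf_{u\in\mathcal{U}_{\rho,\varepsilon}}J(u;f_0,g_0)$ and, together with a one-line remark that $V(f,g)<\infty$ for any single bounded pair, finiteness of $V^*$. The problem is the reverse inequality, and there your argument is not a proof. You pass to the closed convex hull $\mathcal{R}$ of the attainable marginal flows, apply Sion there, collapse the sup over the cone to the hard constraint $\rho(\mathbf{P},\mu)=0$, and then must show that the relaxation creates no gap, i.e.\ that every constrained $\mathbf{P}\in\mathcal{R}$ is a limit of attainable flows $\mathbf{P}^{v_k}$. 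You yourself flag this as ``the main obstacle'' and defer it to a ``mimicking/relaxation argument in the spirit of Krylov''; but this is precisely where all the difficulty of the theorem sits, and it is genuinely nontrivial: mimicking theorems produce Markovian feedback coefficients for which a strong solution adapted to the given Brownian filtration (hence a control in $\mathcal{U}$) need not exist under (A1), and convexity of the closure of $\{\mathbf{P}^u:u\in\mathcal{U}\}$ for strict controls is not available off the shelf. In addition, your route leans on (A4) (compactness of $U$) and on compactness of $\mathcal{R}$ in a suitable flow topology, neither of which is among the hypotheses of Theorem~\ref{thm:3.2}, while the hypothesis that is assumed --- existence of a minimizer $u_{f,g}$ for every $(f,g)\in\mathcal{D}$ --- is left unused (your remark that it ``keeps the infima attained'' is not its actual role).

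The paper avoids convexifying the flow set altogether. It applies the Sion-type minimax theorem directly on $\mathcal{U}\times\tilde{\mathcal{D}}$: the pairing is concavelike in $(f,g)$ by convexity of $\tilde{\mathcal{D}}$ and compactness is only needed on $\tilde{\mathcal{D}}$ (given by (B1)), while convexlikeness in $u$ is verified by invoking exactly the assumed minimizers $u_{f,g}$. This yields $-V^*=\sup_{\lambda>0}\inf_{u\in\mathcal{U}}\hat J_\lambda(u)$ with the penalized functional $\hat J_\lambda(u)=\Psi[u,(f_0,g_0)]+\lambda\rho(P^u,\mu)$, and the identification of this supremum with $\lim_{\varepsilon\searrow0}\inf_{u\in\mathcal{U}_{\rho,\varepsilon}}\Psi[u,(f_0,g_0)]$ is then done by a quantitative penalization argument on near-minimizers $u_n$ (showing $\lambda_n\rho(P^{u_n},\mu)\to0$ by contradiction), with no compactness of $U$, no tightness, and no mimicking step. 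So the structure you would need to complete --- either prove the convexity/approximation property of the attainable flows under the stated hypotheses, or replace it by the convexlike minimax plus penalization as in the paper --- is missing, and as it stands the proposal has a genuine gap at its central step.
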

\begin{rem}
The assumption that a minimizer $u_{f,g}\in\mathcal{U}$ exists for every $(f,g)\in\mathcal{D}$ is used to verify the convexlike condition on $\Psi[u,(f,g)]$ required by Sion's minimax theorem (Step~(ii) of the proof below).
Relaxing this to the existence of $\varepsilon$-optimal controls, or establishing it via measurable selection arguments under appropriate continuity conditions on $(f,g)\mapsto J^*(f,g)$, would broaden the scope of Theorem~\ref{thm:3.2} and is a subject of future work.
\end{rem}
\begin{proof}[Proof of Theorem \rm{\ref{thm:3.2}}]
Step (i). Take an arbitrary element $(f_1,g_1)\in\tilde{\mathcal{D}}$ and $\lambda>0$.
Since $f_0$ is bounded below and $f_1$, $g_0$, $g_1$ are all bounded, we have
$J^*(f_0+\lambda f_1, g_0+\lambda g_1) >-\infty$ and so
$V^*< \infty$.

Let $u_0\in\mathcal{U}$ be such that $\rho(P^{u_0},\mu)=0$. Then, since $\tilde{\mathcal{D}}$ is symmetric at origin, for any $(f,g)\in\tilde{\mathcal{D}}$, 
\[
 \int_0^T\langle f(t,\cdot,\cdot), \mu_t - P_t^{u_0}\rangle\, dt + \langle g, \tilde{\mu}_T - \tilde{P}_T^{u_0}\rangle=0. 
\]
Thus, for any $(f,g)\in\tilde{\mathcal{D}}$, 
\begin{align*}
 &\int_0^T\langle f_0+\lambda f, \mu_t\rangle\, dt + \langle g_0+\lambda g, \tilde{\mu}_T\rangle - J^*(f_0+\lambda f, g_0+\lambda g) \\ 
 &\ge \int_0^T\langle f_0, \mu_t - P_t^{u_0}\rangle\, dt + \langle g_0, \tilde{\mu}_T- \tilde{P}_T^{u_0}\rangle. 
\end{align*}
Taking the infimum over $(f,g)\in\tilde{\mathcal{D}}$, we have 
\[
 V^*\ge \int_0^T\langle f_0, \mu_t - P_t^{u_0}\rangle\, dt + \langle g_0, \tilde{\mu}_T- \tilde{P}_T^{u_0}\rangle > -\infty. 
\]
Hence $V^*$ is finite. 

Step (ii). 
By (B1) and the bilinearity of the integral $\langle\cdot, \cdot\rangle$, clearly, the function 
\[
 \Psi[u, (f,g)] = \int_0^T\langle f, P_t^{u}-\mu_t\rangle\, dt + \langle g, \tilde{P}_T^{u}-\tilde{\mu}_T\rangle, 
\]
is concavelike in $\tilde{\mathcal{D}}$, i.e., 
for any $(f_1,g_1), (f_2,g_2)\in\tilde{\mathcal{D}}$, $\alpha\in [0,1]$, there exists $(f,g)\in\tilde{\mathcal{D}}$ such that 
$\alpha \Psi[u, (f_1,g_1)] + (1-\alpha)\Psi[u, (f_2,g_2)]\le \Psi[u, (f,g)]$ for any $u\in\mathcal{U}$. 
Moreover, the function $\Psi[u, (f,g)]$ is convexlike in $\mathcal{U}$, i.e., for any $u,\tilde{u}\in\mathcal{U}$, $\alpha\in [0,1]$, there exists $u^{\prime}\in\mathcal{U}$ 
such that $\alpha \Psi[u, (f,g)] + (1-\alpha)\Psi[\tilde{u}, (f,g)]\ge \Psi[u^{\prime}, (f,g)]$ for any $(f,g)\in\tilde{\mathcal{D}}$. Indeed, we can always take an optimal
$u_{f,g}$ for $u^{\prime}$.
Note that the existence of a minimizer $u_{f,g}$ for each $(f,g)\in\mathcal{D}$, assumed in Theorem~\ref{thm:3.2}, is precisely what guarantees this convexlike condition required by Sion's minimax theorem.
Also, by (B1), the set $\{\phi_{f,g}: (f,g)\in\tilde{\mathcal{D}}\}$ is compact.
Furthermore, the map $(f,g)\mapsto \Psi[u,(f,g)]$ is continuous with respect to the topology on $\tilde{\mathcal{D}}$:
indeed, $\Psi[u,(f,g)]$ is linear in $(f,g)$, and since the set $\{\phi_{f,g}:(f,g)\in\tilde{\mathcal{D}}\}$ is compact
in $C_b([0,T]\times\mathbb{R}^d\times U,\mathbb{R}^2)$, it is uniformly bounded, say $\|\phi_{f,g}\|_\infty\le M$
for all $(f,g)\in\tilde{\mathcal{D}}$.
Hence, by the bounded convergence theorem (applicable under assumptions (A2) and \eqref{eq:2.1}),
$\Psi[u,(f,g)]$ is norm-continuous in $(f,g)$ on $\tilde{\mathcal{D}}$,
and in particular upper and lower semicontinuous as required by Sion's theorem.
Hence, applying Sion's minimax theorem \cite[Theorem 4.2]{sio:1958}, we get
\[
\sup_{(f,g)\in\tilde{\mathcal{D}}}\inf_{u\in\mathcal{U}}\Psi[u, (f,g)] = \inf_{u\in\mathcal{U}}\sup_{(f,g)\in\tilde{\mathcal{D}}}\Psi[u, (f,g)], 
\]
whence 
\begin{align*}
 -V^* &=  \sup_{\lambda>0}\sup_{(f,g)\in\tilde{\mathcal{D}}}\inf_{u\in\mathcal{U}}\left\{\Psi[u,(f_0,g_0)] + \lambda \Psi[u, (f,g)]\right\} 
 = \sup_{\lambda>0}\inf_{u\in\mathcal{U}}\hat{J}_{\lambda}(u)
\end{align*}
where
\begin{align*}
 \hat{J}_{\lambda}(u) &= \int_0^T\langle f_0(t,\cdot,\cdot), P_t^u-\mu_t\rangle\, dt + \langle g_0, \tilde{P}_T^u-\tilde{\mu}_T\rangle + \lambda \rho(P^u, \mu), \quad u\in\mathcal{U}. 
\end{align*}
For any $\varepsilon>0$ and $\lambda>0$, 
\[
 \inf_{u\in\mathcal{U}}\hat{J}_{\lambda}(u)\le \inf_{u\in\mathcal{U}_{\rho,\varepsilon}}\Psi[u, (f_0,g_0)] + \lambda \varepsilon. 
\]
Letting $\varepsilon\to 0$ and then taking the supremum with respect to $\lambda$ in the inequality just above, we get 
\[
 -V^* \le \lim_{\varepsilon\to 0}\inf_{u\in\mathcal{U}_{\rho,\varepsilon}}\Psi[u, (f_0,g_0)]. 
\]

Step (iii). 
Let $\varepsilon>0$ be fixed. 
Let $\{\lambda_n\}_{n=1}^{\infty}$ be a positive sequence such that $\lim_{n\to\infty}\lambda_n=\infty$ and
\[
 \lim_{n\to\infty}\inf_{u\in\mathcal{U}}\hat{J}_{\lambda_n}(u) = \sup_{\lambda>0}\inf_{u\in\mathcal{U}}\hat{J}_{\lambda}(u). 
\]
Let $\{\varepsilon_n\}_{n=1}^{\infty}$ be a positive sequence such that $\varepsilon_n\to 0$, $n\to\infty$.
Then take $u_n\in\mathcal{U}$ satisfying  
\[
 \hat{J}_{\lambda_n}(u_n)\le \inf_{u\in\mathcal{U}}\hat{J}_{\lambda_n}(u) + \varepsilon_n 
\] 
for any $n\in\mathbb{N}$. 
We will show that 
\begin{equation}
\label{eq:3.2}
 \lim_{n\to\infty}\lambda_{n}\rho(P^{u_n},\mu) = 0. 
\end{equation}
Assume contrary that 
\[
 \limsup_{n\to\infty}\lambda_{n}\rho(P^{u_n},\mu) = 5\delta
\]
holds for some $\delta>0$. Then there exists a subsequence $\{n_k\}$ such that 
$\lim_{k\to\infty}\lambda_{n_k}\rho(P^{u_{n_k}},\mu) = 5\delta$.
Our assumption in the theorem means the existence of $u^*\in\mathcal{U}$ such that $\rho(P^{u^*},\mu)=0$. 
Thus $\hat{J}_{\lambda_n}(u_n)\le \hat{J}_{\lambda_n}(u^*) + \varepsilon_n$ for any $n$. 
In particular, the sequence $\{\hat{J}_{\lambda_{n_k}}(u_{n_k})\}_{k=1}^{\infty}$ is bounded, whence 
we can take a further subsequence $\{n_{k_m}\}$ such that 
\begin{equation*}
 \lim_{m\to\infty}\hat{J}_{\bar{\lambda}_m}(\bar{u}_m)=\kappa:=\limsup_{k\to\infty} \hat{J}_{\lambda_{n_k}}(u_{n_k}) < \infty, 
\end{equation*}
where $\bar{\lambda}_m=\lambda_{n_{k_m}}$ and $\bar{u}_m=u_{n_{k_m}}$. 
Then put $\bar{\gamma}_m=\rho(P^{\bar{u}_m},\mu)$. 
Now choose $m_0$ and $m_1$ such that 
$\kappa< \hat{J}_{\bar{\lambda}_{m_0}}(\bar{u}_{m_0}) + \delta$, $\hat{J}_{\bar{\lambda}_{m_1}}(\bar{u}_{m_1}) < \kappa +\delta$, 
$\bar{\lambda}_{m_1}> 7\bar{\lambda}_{m_0}$, and that 
$3\delta + \bar{\varepsilon}_{m_0}< \bar{\lambda}_{m_1}\bar{\gamma}_{m_1}< 7\delta$, 
where $\bar{\varepsilon}_m=\varepsilon_{n_{k_m}}$. 
With these choices it follows that 
\begin{align*}
 \kappa &< \hat{J}_{\bar{\lambda}_{m_0}}(\bar{u}_{m_0}) + \delta 
 \le \inf_{u\in\mathcal{U}}\hat{J}_{\bar{\lambda}_{m_0}}(u) + \bar{\varepsilon}_{m_0} + \delta 
 \le \hat{J}_{\bar{\lambda}_{m_0}}(\bar{u}_{m_1}) + \bar{\varepsilon}_{m_0} + \delta \\ 
 &= \int_0^T\langle f_0(t,\cdot,\cdot), P_t^{\bar{u}_{m_1}}-\mu_t\rangle\, dt 
      + \langle g_0, \tilde{P}_T^{\bar{u}_{m_1}}-\tilde{\mu}_T\rangle 
 + \bar{\lambda}_{m_0}\bar{\gamma}_{m_1} + \bar{\varepsilon}_{m_0} + \delta. 
\end{align*}
Observe 
\begin{align*}
 &\bar{\lambda}_{m_0}\bar{\gamma}_{m_1} + \bar{\varepsilon}_{m_0} + \delta< \frac{1}{7}\bar{\lambda}_{m_1}\bar{\gamma}_{m_1} + \bar{\varepsilon}_{m_0}  + \delta 
 < 2\delta + \bar{\varepsilon}_{m_0}< \bar{\lambda}_{m_1}\bar{\gamma}_{m_1} - \delta, 
\end{align*}
whence $\kappa < \hat{J}_{\bar{\lambda}_{m_1}}(\bar{u}_{m_1}) -\delta$, 
which is impossible.  

In particular, $\rho(P^{u_n},\mu)\le \varepsilon$ for a sufficiently large $n$. 
This yields 
\begin{align*}
&\inf_{u\in\mathcal{U}}\hat{J}_{\lambda_n}(u)\ge \hat{J}_{\lambda_n}(u_n) - \varepsilon_n 
\ge \inf_{u\in\mathcal{U}_{\rho,\varepsilon}}\Psi[u, (f_0,g_0)] + \lambda_n\rho(P^{u_n},\mu) - \varepsilon_n. 
\end{align*}
Letting $n\to\infty$ and then $\varepsilon\to 0$, we get 
\[
 -V^*\ge \lim_{\varepsilon\searrow 0}\inf_{u\in\mathcal{U}_{\rho,\varepsilon}}\Psi[u, (f_0,g_0)]. 
\]
Therefore the theorem follows. 
\end{proof}

\subsection*{Schr{\"o}dinger problem}

Here, we shall discuss a connection between Theorem \ref{thm:3.2} and the Schr{\"o}dinger bridge problems. 
Consider the case where the forward control problem \eqref{eq:1.1} is described as
\[
 J(u) = \mathbb{E}\int_0^T|u_t|^2dt,
\]
with $U=\mathbb{R}^d$, where the controlled process $X_t^u$ is governed by
\[
 dX_t^u = u_tdt + dB_t, \quad X_0\sim \tilde{\mu}_0.
\]
The class $\mathcal{D}$ of latent objectives is given by
\[
 \mathcal{D}=\{(|u|^2, g): g\in C_b(\mathbb{R}^d)\}.
\]
Set $f_0(t,x,u)=|u|^2$, $g_0=0$, and let
\[
 \tilde{\mathcal{D}}=\{(0, g): g\in\mathcal{C}\},
\]
where $\mathcal{C}=\left\{g\in C_b(\mathbb{R}^d): \|g\|_{BL}\le 1\right\}$
with $\|g\|_{BL}=\sup_{x\in\mathbb{R}^d}|g(x)| + \sup_{x\neq y}|g(x)- g(y)|/|x-y|$.
We apply Theorem~\ref{thm:3.2} with $\mathcal{D}$ replaced by
$(f_0,g_0)+\bigcup_{\lambda>0}\lambda\tilde{\mathcal{D}}\subset\mathcal{D}$.
Since $f_0=|u|^2\ge 0$, the condition (B1) is satisfied:
$f_0$ is bounded below (hence the proof of Theorem~\ref{thm:3.2} applies),
$g_0=0\in C_b(\mathbb{R}^d)$, and the class $\{\phi_{f,g}:(f,g)\in\tilde{\mathcal{D}}\}$
is non-empty, convex, symmetric at origin, and compact
in the topology of uniform convergence on compact subsets of $\mathbb{R}^d$.
By the Kantorovich--Rubinstein theorem (see, e.g., Dudley \cite{dud:2002}),
\[
 \rho(\pi, \nu)= \sup_{g\in\mathcal{C}}\,\langle g, \tilde{\pi}_T-\tilde{\nu}_T\rangle
\]
is the bounded-Lipschitz metric on $\mathcal{P}(\mathbb{R}^d)$.
In particular, $\rho(\pi, \nu)=0$ if and only if $\tilde{\pi}_T=\tilde{\nu}_T$.
Consequently, our optimal transport problem becomes
\begin{equation}
\label{eq:3.3}
 \inf_{u\in\mathcal{U}_{\rho,0}}J(u) = \inf\left\{\mathbb{E}\int_0^T|u_t|^2dt: X_T^u\sim \tilde{\mu}_T, \; u\in\mathcal{U}\right\},
\end{equation}
which is the so-called Schr{\"o}dinger's bridge problem.

Now, using Theorem \ref{thm:3.2} we shall derive the duality formula for the Schr{\"o}dinger's problem, which is 
obtained in Mikami and Thieullen \cite{mik-thi:2006}. See Mikami \cite{mik:2021a}, \cite{mik:2021b} for further generalizations. 
It should be emphasized that our approach is completely different from ones used in \cite{mik-thi:2006}, \cite{mik:2021a}, and \cite{mik:2021b}.  
Recall from Section \ref{sec:2} that we have assumed 
\[
 \int_{\mathbb{R}^d}|x|^2\tilde{\mu}_0(dx) < \infty. 
\]
\begin{cor}[\cite{mik-thi:2006}]
\label{cor:3.3}
Suppose that $\tilde{\mu}_T$ has a positive density $\phi_T$ such that
\[
 \int_{\mathbb{R}^d}(|y|^2 + \log\phi_T(y))\phi_T(y)dy < \infty.
\]
Then we have
\begin{align*}
 &\sup_{g\in C_b(\mathbb{R}^d)}\left\{\inf_{u\in\mathcal{U}}\mathbb{E}\left[\int_0^T|u_t|^2dt + g(X_T^u)\right] - \langle g, \tilde{\mu}_T\rangle  \right\} 
 =  \inf\left\{\mathbb{E}\int_0^T|u_t|^2dt:  X_T^u\sim \tilde{\mu}_T, \; u\in\mathcal{U}\right\}.
\end{align*}
\end{cor}
\begin{proof}
Let $p(t,x,s,y)$ be the transition density of $d$-dimensional Brownian motion.   
Then, there exists a $\sigma$-finite product measure $\nu_0(dx)\nu_T(dy)$ such that 
\[
 \pi(E):= \int_Ep(0,x,1,y)\nu_0(dx)\nu_T(dy), \quad E\in\mathcal{B}(\mathbb{R}^d\times\mathbb{R}^d), 
\]
satisfies $\pi(dx\times\mathbb{R}^d)=\tilde{\mu}_0(dx)$ and $\pi(\mathbb{R}^d\times dy)=\tilde{\mu}_T(dy)$ and 
\[
 H(\pi\;|\; p(0,x,1,y)\tilde{\mu}_0(dx)dy)< \infty,  
\]
where $H(\pi\,|\, R)$ denotes the relative entropy of $\pi$ with respect to $R$. 
See e.g., Theorem 2.1 in Nutz \cite{nut:2022}. 
Then, by Theorem 2 in Jamison \cite{jam:1975}, there exists a weak solution of 
\[
 dX_t^* = \nabla h(t,X_t^*)dt + dB_t, \quad X_0^*\sim \tilde{\mu}_0, 
\]
where 
\[
 h(t,x) = \int_{\mathbb{R}^d}p(t,x,1,y)\nu_T(dy), \quad (t,x)\in [0,T)\times\mathbb{R}^d.  
\]
We assume that our probability space, filtration, and Brownian motion are identical to those used to construct this weak solution. 
Then, the control process $\{u_t^*\}_{0\le t\le T}$ defined by $u^*_t=\nabla h(t,X_t^*)$ is in $\mathcal{U}$ and satisfies 
\[
 \inf_{u\in\mathcal{U}_{\rho,0}}J(u) = J(u^*) = 2 H(\pi\;|\; p(0,x,1,y)\tilde{\mu}_0(dx)dy), 
\]
whence $u^*$ is optimal to the Schr{\"o}dinger's problem \eqref{eq:3.3}. 
In particular, the finiteness condition \eqref{eq:3.1} is satisfied.

Let us show 
\begin{equation}
\label{eq:3.4}
 \lim_{\varepsilon\searrow 0}\inf_{u\in\mathcal{U}_{\rho,\varepsilon}}J(u) = \inf_{u\in\mathcal{U}_{\rho,0}}J(u). 
\end{equation}
To this end, let $P^W, P^*\in\mathcal{P}(\mathbb{W}^d)$ be the law of $X_0+B$ and $X^*$, respectively. Then it is well-known that 
\begin{equation}
\label{eq:3.5}
 \inf H(Q\,|\, P^W)= H(P^*\,|\,P^W), 
\end{equation}
where the infimum is taken over all $Q\in\mathcal{P}(\mathbb{W}^d)$ with $Q_0=\tilde{\mu}_0$ and $Q_T=\tilde{\mu}_T$. Here $Q_0$ and $Q_T$ denote
the marginals of $Q$ at time $0$ and $T$, respectively.
Let $\varepsilon_n$ be a positive sequence such that $\varepsilon_n\searrow 0$. Let $u^{(n)}\in\mathcal{U}_{\rho,\varepsilon_n}$ be such that
\[
 J(u^{(n)})\le \inf_{u\in\mathcal{U}_{\rho,\varepsilon_n}}J(u) + \varepsilon_n.
\]
Put $X^{(n)} = X^{u^{(n)}}$. Let $\delta>0$ be arbitrary.
For a fixed $N\ge 1$ take $\{t_i\}$ so that $0=t_0<t_1<\cdots<t_N = T$ and $t_{\ell}-t_{\ell -1}=T/N$. Then from Theorem 7.4 in Billingsley \cite{bil:1999} it follows that
\begin{align*}
 &\mathbb{P}\left(\max_{\substack{0\le s<t\le T\\ 0<t-s<T/N}}|X_t^{(n)}-X_s^{(n)}|>\delta \right) \\
 &\le \mathbb{P}\left(\max_{\substack{0\le s<t\le T\\ 0<t-s<T/N}}\left|\int_s^t u_r^{(n)}dr\right|> \delta/2\right) 
 + \sum_{\ell=1}^N\mathbb{P}\left(\sup_{t_{\ell -1}\le s\le t_{\ell}}|B_s - B_{t_{\ell -1}}|\ge \delta/6\right).
\end{align*}
The first term of the right-hand side is at most
\begin{align*}
 &\frac{4}{\delta^2 N}\sup_n\mathbb{E}\int_0^T\left[1+|u_r^{(n)}|^2\right]dr 
 \le\frac{4}{\delta^2 N}\left[T + \sup_n\mathbb{E}\int_0^T|u_r^{(n)}|^2dr\right].
\end{align*}
The second term is at most $C_0\delta^{-3}N^{-1/2}$ where we have used Doob's maximal inequality.
Thus, for any $\delta>0$,
\[
 \lim_{N\to \infty}\sup_{n\ge 1}\mathbb{P}\left(\max_{\substack{0\le s<t\le T\\ 0<t-s<T/N}}|X_t^{(n)}-X_s^{(n)}|>\delta\right)=0.
\]
Therefore, by Theorem 7.3 in Billingsley \cite{bil:1999}, the sequence of laws of $X^{(n)}$, $n\in\mathbb{N}$, is tight.
Denote by $Q^{(n)}$ the probability distribution of $X^{(n)}$ under $\mathbb{P}$.
So there exist a subsequence $\{n_k\}$ and $\hat{P}\in\mathcal{P}(\mathbb{W}^d)$ such that $Q^{(n_k)}$ weakly converges to $\hat{P}$.
Since $\rho$ is a metric on $\mathcal{P}(\mathbb{R}^d)$ we have
\[
 \rho(\hat{P},\tilde{\mu})\le \rho(\hat{P}, Q^{(n_k)})
  + \rho(Q^{(n_k)},\tilde{\mu})\to 0,
\]
as $k\to\infty$, whence $\hat{P}_1=\tilde{\mu}_1$.
Further, by the lower semi-continuity of $Q\mapsto H(Q|P)$,
\begin{align*}
 &H(P^*\,|\, P^W) \le H(\hat{P}\,|\, P^W) 
 =\liminf_{k\to\infty} H(Q^{(n_k)}\,|\,P^W) 
 = \frac{1}{2}\liminf_{k\to\infty}J(u^{(n_k)})\le H(P^*\,|\, P^W).
\end{align*}
This means that $\hat{P}$ is optimal to \eqref{eq:3.5}, whence by uniqueness, we obtain $\hat{P}=P^*$.
What we have shown now is that each subsequence $\{Q^{(n_k)}\}$ contains a further subsequence
$\{Q^{(n_{k_j})}\}$ converging weakly to $P^*$.
Applying Theorem 2.6 in \cite{bil:1999}, we deduce that $\{Q^{(n_k)}\}$ converges weakly to $P^*$ as $k\to\infty$.
Again by the lower semi-continuity of the relative entropy,
\begin{align*}
 &\liminf_{n\to\infty}J(u^{(n)})
 =2\liminf_{n\to\infty}H(Q^{(n)}\,|\,P^W) 
 \ge 2H(P^*\,|\, P^W)\ge \limsup_{n\to\infty} J(u^{(n)}).
\end{align*}
Hence, 
\begin{align*}
 &J(u^*)=2H(P^*\,|\, P^W) = \lim_{n\to\infty}J(u^{(n)})
 \le \lim_{n\to\infty}\inf_{u\in\mathcal{U}_{\rho,\varepsilon}}J(u). 
\end{align*}
So \eqref{eq:3.4} holds. This together with Theorem~\ref{thm:3.2} concludes. 

\end{proof}

%%%

\section*{Conclusion}\label{sec:4}

We have studied the inverse stochastic control problem from a variational perspective.
Given an observed state-control distribution $\{\mu_t\}$, which need not be realizable as the marginal law of any controlled process,
we introduced the suboptimality gap functional $V(f,g)$ as a natural criterion for identifying latent cost functions.
Under mild regularity conditions, we established the existence of a minimizer for the resulting inverse problem \eqref{eq:1.4} (Proposition~\ref{prop:2.4}).

The main contribution of the paper is a variational equivalence between the inverse problem and a generalized dynamic Schr\"odinger problem (Theorem~\ref{thm:3.2}):
the optimal value $V^*$ coincides with the value of a stochastic control problem with relaxed marginal constraints, measured by the pseudometric $\rho$ induced by the class $\tilde{\mathcal{D}}$ of latent objectives.
As a consequence, the classical duality formula for the Schr\"odinger bridge problem is recovered as a special case (Corollary~\ref{cor:3.3}), via a different approach from those in \cite{mik-thi:2006,mik:2021a,mik:2021b}.

Several directions remain open.
A regularization of the inverse problem, which would replace the hard constraint $\rho(P^u,\mu)=0$ by a penalized formulation, is a natural extension.
Uniqueness of the minimizer under general conditions, and computational methods for solving the inverse problem in practice, are also subjects of future work.

%%%

\subsection*{Acknowledgements}

This study is supported by JSPS KAKENHI Grant Number JP24K06861.

%\nocite{*}
\bibliographystyle{plain}
\bibliography{../mybib}

\end{document}